\documentclass[12pt]{article}
\usepackage{amsmath,amsfonts,amsthm,amssymb, mathtools}
\usepackage{mathrsfs, graphicx,color,latexsym, tikz, calc,cite}
 \usepackage{indentfirst}
\usetikzlibrary{shadows}
\usetikzlibrary{patterns,arrows,decorations.pathreplacing}
\textwidth 160mm \textheight230mm \oddsidemargin=0cm
\evensidemargin=0cm \topmargin=-1.5cm

\usepackage[colorlinks=true,
linkcolor=blue,citecolor=blue,
urlcolor=blue]{hyperref}

\newtheorem{theorem}{Theorem}
\newtheorem{lemma}{Lemma}

\newtheorem{corollary}{Corollary}

\title{\bf \Large Splitting fields  of mixed Cayley graphs over abelian groups}

\author{Xueyi Huang$^a$,\ \ Lu Lu$^{b,}$\footnote{Corresponding author.}\setcounter{footnote}{-1}\footnote{\emph{E-mail address:} huangxymath@163.com (X. Huang), lulumath@csu.edu.cn (L. Lu), katja.moenius@mathematik.uniwuerzburg.de (K. M\"{o}nius).},\ \ Katja M\"{o}nius$^c$\\[2mm]
\small $^a$School of Mathematics, East China University of Science and Technology,\\
\small Shanghai 200237, P. R. China\\
\small $^b$School of Mathematics and Statistics, Central South University,\\
\small Changsha, Hunan, 410083, P. R. China\\
\small $^c$Institute of Mathematics, W\"{u}rzburg University, Emil-Fischer-Str. 40, \\
\small 97074 W\"{u}rzburg, Germany}

\date{ }

\begin{document}

\maketitle

\begin{abstract}
The splitting field $\mathbb{SF}(\Gamma)$ of a mixed graph $\Gamma$ is the smallest field extension of $\mathbb{Q}$ which contains all eigenvalues of the Hermitian adjacency matrix of $\Gamma$. The extension degree $[\mathbb{SF}(\Gamma):\mathbb{Q}]$ is called the algebraic degree of $\Gamma$. In this paper, we determine the splitting fields and algebraic degrees of mixed Cayley graphs over abelian groups. This generalizes the main results of [K. M\"{o}nius, Splitting fields of spectra of circulant graphs, J. Algebra 594(15) (2022) 154--169] and  [M. Kadyan, B. Bhattacharjya, Integral mixed Cayley graphs over abelian groups, Electron. J. Combin. 28(4) (2021) \#P4.46].

\noindent {\it AMS classification:} 05C50, 12F05, 12F10\\[1mm]
\noindent {\it Keywords}: Mixed Cayley graph; Integral graph; Splitting Field; Algebraic degree
\end{abstract}

\baselineskip=0.202in

\section{Introduction}

All graphs considered in this paper have neither multiple edges nor loops. Let $\Gamma= (V, E, \vec{E})$ be a mixed graph with vertex set $V$, undirected edge set $E$ and directed edge (or arc) set  $\vec{E}$.  Here an undirected edge can be  viewed as two arcs in the opposite direction. We denote an undirected edge joining two vertices $u$ and $v$ in  $\Gamma$ by $uv$ or $vu$, and an arc from $u$ to $v$ by $(u,v)$. We say that $\Gamma$ is \textit{undirected} if  $\vec{E}=\emptyset$. 
The \textit{Hermitian adjacency matrix}  of $\Gamma$,  introduced  by Liu and Li \cite{LL15}, and independently by Guo and Mohar \cite{GM17}, is defined as $H(\Gamma)=(h_{uv})_{u,v\in V}$, where
$$
h_{uv}=\left\{
\begin{array}{ll}
1, & \mbox{if $uv\in E$,}\\
\mathbf{i}, & \mbox{if $(u,v)\in \vec{E}$,}\\
-\mathbf{i}, & \mbox{if $(v,u)\in \vec{E}$,}\\
0,& \mbox{otherwise.}
\end{array}
\right.
$$
Here $\mathbf{i}=\sqrt{-1}$. The \textit{Hermitian polynomial} of $\Gamma$, denoted by $p_H(\Gamma)$, is defined as the characteristic polynomial  of $H(\Gamma)$. The roots of $p_H(\Gamma)$ are called the   \textit{Hermitian eigenvalues}  of $\Gamma$.  Since  $H(\Gamma)$ is a Hermitian matrix with entries $0$, $1$ or $\pm \mathbf{i}$,   the Hermitian polynomial $p_H(\Gamma)$ is a monic polynomial with integer coefficients, and  hence every Hermitian eigenvalue of $\Gamma$ is an algebraic integer contained in some algebraic extension of $\mathbb{Q}$. The \textit{splitting field} of $\Gamma$, denoted by $\mathbb{SF}(\Gamma)$, is defined as the splitting field of $p_H(\Gamma)$, that is, the smallest field extension of $\mathbb{Q}$ over which $p_H(\Gamma)$  decomposes into linear factors. The \textit{algebraic degree} of $\Gamma$, denoted by $\mathrm{deg}(\Gamma)$, is the  degree of the least algebraic extension of $\mathbb{Q}$  which contains all Hermitian eigenvalues of $\Gamma$, i.e., $[\mathbb{SF}(\Gamma):\mathbb{Q}]$.  In particular, we say that $\Gamma$ is \textit{integral} if all its Hermitian eigenvalues are integers, or equivalently, $\mathrm{deg}(\Gamma)=1$.

Let  $G$ be a group with identity $1$, and let  $S$ be a subset of $G\setminus\{1\}$. For convenience,  we partition $S$ into two parts $S_1$ and $S_2$ such that  $S_1=S_1^{-1}$ and $S_2\cap S_2^{-1}=\emptyset$.  The \textit{Cayley graph}   $\mathrm{Cay}(G,S)$ is defined as the graph with vertex set $G$, with an undirected edge connecting two vertices $g,h\in G$ if $hg^{-1}\in S_1$ (and correspondingly, $gh^{-1}\in S_1$), and with an arc from $g$ to $h$ if  $hg^{-1}\in S_2$.  Clearly, $\mathrm{Cay}(G,S)$ is a mixed graph.  

In 1974, Harary and Schwenk \cite{HS74} asked for the characterization of integral (undirected) graphs, i.e., graphs with   algebraic degree  $1$.  In the past half century, this problem has aroused a great deal of interest, but is still far from resolved. For more results on integral graphs, especially  integral Cayley graphs, we refer the reader to \cite{AV09,ABM14,AP12,BCRSS02,BC76,B08,CFH19,CFYZ21,DKMA13,KB21,KS10,HL21,LHH18,MSP18,S05,WL05}, and references therein. As a natural generalization of  Harary and Schwenk's problem from an algebraic or number-theoretical viewpoint, M\"{o}nius, Steuding and Stumpf \cite{MSP18} proposed to determine the algebraic degree of a graph.  In \cite{M20,M22}, M\"{o}nius determined the splitting fields and algebraic degrees of  undirected Cayley graphs over cyclic groups. 

In this paper, we determine  the  splitting fields and algebraic degrees of mixed Cayley graphs over abelian groups, which  extends  the main result of  \cite{M22}. Furthermore, we provide a characterization of  integral mixed Cayley graphs over abelian groups, which implies the main result of \cite{KB21}.

\section{Main results}

In this section, we will determine  the  splitting fields and algebraic degrees of mixed Cayley graphs over abelian groups.  To achieve this goal, we need some notations and lemmas.

For a positive integer $t$, let $\zeta_{t}=\exp(2\pi\mathbf{i}/t)$ denote the primitive $t$-th root of unity, $\mathbb{Z}_{t}$ denote the ring  (or additive group) of integers module $t$, and $\mathbb{Z}_{t}^*=\{k\in \mathbb{Z}_{t}\mid \mathrm{gcd}(k,t)=1\}$ denote the multiplicative group of units in the ring $\mathbb{Z}_{t}$.

 From now on, we assume that $G$ is an abelian group under addition. For convenience, we write $G$ in  the form  
\begin{equation}\label{equ::0}
G=\mathbb{Z}_{n_1}\otimes \mathbb{Z}_{n_2}\otimes\cdots\otimes\mathbb{Z}_{n_r},
\end{equation}
where $n_i$ is a prime power for $1\leq i\leq r$. Clearly, $|G|=n_1n_2\cdots n_r$. Note that each $g\in G$ can be expressed as $g=(g_1,g_2,\ldots,g_r)$, where $g_i\in \mathbb{Z}_{n_i}$ for $1\leq i\leq r$.  For $g\in G$, let $\chi_g$ be the vector in $\mathbb{C}^{|G|}$ defined by
$$\chi_g(x)=\prod_{i=1}^r\zeta_{n_i}^{g_ix_i}, ~~\mbox{for all $x\in G$}.$$
It is well known that $\chi_g$,  $g\in G$, are  all  irreducible characters of $G$. For any subset $X\subseteq G$,   let 
$$\chi_g(X)=\sum_{x\in X}\chi_g(x),$$ 
and let  $\delta_X$ be the characteristic vector of $X$ over $G$ where $\delta_X(g) = 1$ if $g \in X$ and $\delta_X(g) = 0$ if $g \in G\setminus X$.

Based on  Babai's   formula  for  calculating  the  eigenvalues of Cayley color graphs (cf. \cite{B79}),  Kadyan and Bhattacharjya \cite{KB21} observed that the Hermitian eigenvalues of  mixed Cayley graphs over abelian groups can be expressed as follows.

\begin{lemma}[\cite{B79,KB21}]\label{lem::1}
Let $G$ be an abelian group (under addition) of the form \eqref{equ::0}, and let $S=A\cup B$ be a subset of $G\setminus \{0\}$ such that  $A=-A$ and $B\cap (-B)=\emptyset$. Then the Hermitian eigenvalues of the mixed Cayley graph $\Gamma=\mathrm{Cay}(G,S)$ are 
$$\gamma_g=\lambda_g+\mu_g,~~ \mbox{for all}~g=(g_1,\ldots,g_r)\in G,$$ 
where
$$
\lambda_g=\chi_{g}(A)=\sum_{a\in A}\prod_{i=1}^r\zeta_{n_i}^{g_ia_i},~~\mu_g=\mathbf{i}\left(\chi_g(B)-\chi_g(-B)\right)=\mathbf{i}\sum_{b\in B}\left(\prod_{i=1}^r\zeta_{n_i}^{g_ib_i}-\prod_{i=1}^r\zeta_{n_i}^{-g_ib_i}\right).
$$
\end{lemma}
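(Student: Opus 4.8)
The plan is to show directly that each character vector $\chi_g$ is an eigenvector of the Hermitian adjacency matrix $H=H(\Gamma)$, and then to read off the corresponding eigenvalue. The starting point is two basic properties of the characters of a finite abelian group: first, that $\chi_g$ is multiplicative, i.e.\ $\chi_g(x+y)=\chi_g(x)\chi_g(y)$ for all $x,y\in G$ (immediate from the product formula $\chi_g(x)=\prod_{i=1}^r\zeta_{n_i}^{g_ix_i}$); and second, that the $|G|$ vectors $\{\chi_g:g\in G\}$ are pairwise orthogonal and hence form a basis of $\mathbb{C}^{|G|}$. The second fact guarantees that once each $\chi_g$ is verified to be an eigenvector, we will have produced the full spectrum of $H$.

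For the main computation, fix $g\in G$ and evaluate $(H\chi_g)(x)=\sum_{y\in G}h_{xy}\chi_g(y)$ at an arbitrary $x\in G$. The entry $h_{xy}$ depends only on the difference $y-x$: it equals $1$ when $y-x\in A$, equals $\mathbf{i}$ when $y-x\in B$, equals $-\mathbf{i}$ when $y-x\in -B$, and vanishes otherwise. Substituting $y=x+a$ and invoking multiplicativity to write $\chi_g(x+a)=\chi_g(x)\chi_g(a)$, the factor $\chi_g(x)$ pulls out of every summand, giving
$$(H\chi_g)(x)=\chi_g(x)\Big(\sum_{a\in A}\chi_g(a)+\mathbf{i}\sum_{b\in B}\chi_g(b)-\mathbf{i}\sum_{c\in -B}\chi_g(c)\Big).$$
The bracketed scalar is exactly $\chi_g(A)+\mathbf{i}(\chi_g(B)-\chi_g(-B))=\lambda_g+\mu_g=\gamma_g$, so $H\chi_g=\gamma_g\chi_g$. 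Since the $\chi_g$ form a basis, the numbers $\gamma_g$ ($g\in G$) constitute all Hermitian eigenvalues of $\Gamma$.

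There is no serious obstacle here: the whole argument reduces to the re-indexing $y\mapsto x+a$ together with the character identity, and it is essentially the specialization of Babai's eigenvalue formula for Cayley color graphs to the Hermitian weighting $\{1,\mathbf{i},-\mathbf{i}\}$. The only points demanding care are the bookkeeping of the three weight types---in particular recognizing that the arcs oriented \emph{into} $x$ contribute the $-\mathbf{i}$ term indexed by $-B$---and confirming that $H$ is genuinely Hermitian, which follows from $A=-A$ and $\overline{\mathbf{i}}=-\mathbf{i}$. As a consistency check one can verify from $A=-A$ and $B\cap(-B)=\emptyset$ that $\chi_g(-A)=\overline{\chi_g(A)}$ and $\chi_g(-B)=\overline{\chi_g(B)}$, whence $\lambda_g=\overline{\lambda_g}$ and $\mu_g=-2\,\mathrm{Im}(\chi_g(B))$ are both real, as they must be for a Hermitian matrix.
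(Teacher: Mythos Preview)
Your proof is correct. The paper does not actually supply a proof of this lemma; it merely cites Babai~\cite{B79} and Kadyan--Bhattacharjya~\cite{KB21} and states the formula. Your argument---verifying that each character $\chi_g$ is an eigenvector of $H(\Gamma)$ via the substitution $y=x+a$ and multiplicativity, then invoking orthogonality to conclude these are all the eigenvalues---is precisely the standard specialization of Babai's formula to the abelian, Hermitian-weighted setting, so it matches in spirit what the cited references do.
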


The \textit{exponent} of $G$, denoted by  $exp(G)$, is the least common multiple of the orders of all elements in $G$. For the sake of simplicity, let
$$
n=exp(G)=\mathrm{lcm}(n_1,n_2,\cdots n_r).
$$ 
By Lemma \ref{lem::1}, the splitting filed $\mathbb{SF}(\Gamma)$ of $\Gamma=\mathrm{Cay}(G,S)$ satisfies  the relation
 $\mathbb{Q}\subseteq \mathbb{SF}(\Gamma)\subseteq\mathbb{Q}(\zeta_n,\mathbf{i})\subseteq \mathbb{Q}(\zeta_{4n})$. Therefore, in order to determine $\mathbb{SF}(\Gamma)$, it suffices to consider these fields between $\mathbb{Q}$ and $\mathbb{Q}(\zeta_{4n})$. 
 
Suppose that $\mathbb{K}$ is an arbitrary field satisfying $\mathbb{Q}\subseteq \mathbb{K}\subseteq \mathbb{Q}(\zeta_{4n})$. Borrowing the idea of \cite[Theorem 8]{KB21}, we have the following result.
\begin{lemma}\label{lem::2}
With reference to the notations in Lemma \ref{lem::1}, we have $\gamma_g\in \mathbb{K}$ if and only if $\lambda_g,\mu_g\in \mathbb{K}$, for all $g\in G$.
\end{lemma}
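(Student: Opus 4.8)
The plan is to reconstruct each pair $\lambda_g,\mu_g$ from the two eigenvalues $\gamma_g$ and $\gamma_{-g}$, exploiting that $\lambda_g$ is an \emph{even} function of $g$ while $\mu_g$ is \emph{odd}. The forward implication is immediate: if $\lambda_g,\mu_g\in\mathbb{K}$ for all $g$, then $\gamma_g=\lambda_g+\mu_g\in\mathbb{K}$ for all $g$ since $\mathbb{K}$ is a field. The substance lies in the reverse direction.

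First I would record the two parity identities $\lambda_{-g}=\lambda_g$ and $\mu_{-g}=-\mu_g$. The first follows directly from $A=-A$: substituting $-g$ into $\lambda_g=\sum_{a\in A}\prod_i\zeta_{n_i}^{g_ia_i}$ and reindexing the sum by $a\mapsto -a$ (which merely permutes $A$) recovers $\lambda_g$. The second follows from the defining expression for $\mu_g$ by noting that replacing $g$ with $-g$ interchanges the two inner products $\prod_i\zeta_{n_i}^{g_ib_i}$ and $\prod_i\zeta_{n_i}^{-g_ib_i}$, so the bracketed difference simply changes sign. Consequently $\gamma_{-g}=\lambda_{-g}+\mu_{-g}=\lambda_g-\mu_g$, and combining this with $\gamma_g=\lambda_g+\mu_g$ gives
\[
\lambda_g=\tfrac12(\gamma_g+\gamma_{-g}),\qquad \mu_g=\tfrac12(\gamma_g-\gamma_{-g}).
\]

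With these formulas the reverse implication is essentially bookkeeping. Assuming $\gamma_h\in\mathbb{K}$ for every $h\in G$, I apply the hypothesis to both $h=g$ and $h=-g$; since $\mathbb{Q}\subseteq\mathbb{K}$ we have $\tfrac12\in\mathbb{K}$, so the displayed identities place both $\lambda_g$ and $\mu_g$ in $\mathbb{K}$. This argument is uniform over every intermediate field with $\mathbb{Q}\subseteq\mathbb{K}\subseteq\mathbb{Q}(\zeta_{4n})$, so no case distinction (for instance according to whether $\mathbf{i}\in\mathbb{Q}(\zeta_n)$) is required, and it mirrors the device used in \cite[Theorem 8]{KB21}.

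The step I expect to be the real crux is conceptual rather than computational: recognizing that $\lambda_g$ and $\mu_g$ are both real numbers and hence cannot be separated from $\gamma_g$ alone—indeed one can arrange $\gamma_g\in\mathbb{Q}$ while $\lambda_g\notin\mathbb{Q}$—so that the correct tool is to pair $\gamma_g$ with $\gamma_{-g}$ and use the even/odd symmetry. This is precisely why the statement must be read with ``for all $g\in G$'' governing both sides simultaneously: reconstructing $\lambda_g$ requires $\gamma_{-g}\in\mathbb{K}$, which is available only once \emph{all} eigenvalues are assumed to lie in $\mathbb{K}$. Verifying the parity of $\mu_g$ carefully—here the antisymmetric form of the inner difference together with the factor $\mathbf{i}$ must be tracked—is the one place where a sign slip could occur.
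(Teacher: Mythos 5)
Your proposal is correct and follows essentially the same route as the paper: both establish the parity identities $\lambda_{-g}=\lambda_g$ (from $A=-A$) and $\mu_{-g}=-\mu_g$, and then recover $\lambda_g=\tfrac12(\gamma_g+\gamma_{-g})$ and $\mu_g=\tfrac12(\gamma_g-\gamma_{-g})$ to conclude. Your closing observation---that the ``for all $g\in G$'' quantifier is what makes $\gamma_{-g}$ available---is exactly the point implicitly used in the paper's proof.
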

\begin{proof}
Let $g\in G$. Since $A=-A$, by Lemma \ref{lem::1}, 
$$\lambda_{-g}=\sum_{a\in A}\prod_{i=1}^r\zeta_{n_i}^{-g_ia_i}=\sum_{a\in -A}\prod_{i=1}^r\zeta_{n_i}^{g_ia_i}=\sum_{a\in A}\prod_{i=1}^r\zeta_{n_i}^{g_ia_i}=\lambda_{g}.$$ 
Furthermore, 
$$
\mu_{-g}=\mathbf{i}\sum_{b\in B}\left(\prod_{i=1}^r\zeta_{n_i}^{-g_ib_i}-\prod_{i=1}^r\zeta_{n_i}^{g_ib_i}\right)=-\mathbf{i}\sum_{b\in B}\left(\prod_{i=1}^r\zeta_{n_i}^{g_ib_i}-\prod_{i=1}^r\zeta_{n_i}^{-g_ib_i}\right)=-\mu_g.
$$
Therefore, 
$$
\lambda_g=\frac{\gamma_g+\gamma_{-g}}{2}~\textrm{and}~\mu_g=\frac{\gamma_g-\gamma_{-g}}{2},
$$
and the results follows immediately.
\end{proof}

Now consider the Galois group $\mathrm{Gal}(\mathbb{Q}(\zeta_{4n})/\mathbb{Q})$, where  $n=exp(G)$. For any $\sigma\in\mathrm{Gal}(\mathbb{Q}(\zeta_{4n})/\mathbb{Q})$, let $k_\sigma$ be the element of $\mathbb{Z}_{4n}^*$ such that $\sigma(\zeta_{4n})=\zeta_{4n}^{k_\sigma}$. It is known that the mapping $\eta: \mathrm{Gal}(\mathbb{Q}(\zeta_{4n})/\mathbb{Q})\rightarrow\mathbb{Z}_{4n}^*$ defined by  
$\eta(\sigma)=k_\sigma$, for all  $\sigma\in\mathrm{Gal}(\mathbb{Q}(\zeta_{4n})/\mathbb{Q})$, is a group isomorphism. Let $\mathbb{Z}_{4n}^*$ act on $G=\mathbb{Z}_{n_1}\otimes \mathbb{Z}_{n_2}\otimes\cdots\otimes\mathbb{Z}_{n_r}$ by setting $kg=k(g_1,g_2,\ldots,g_r)=(kg_1,kg_2,\ldots,kg_r)$ for any $k\in \mathbb{Z}_{4n}^*$ and $g\in G$. Let $\sigma\in\mathrm{Gal}(\mathbb{Q}(\zeta_{4n})/\mathbb{Q})$. For any $g_i\in \mathbb{Z}_{n_i}$, we see that
\begin{equation}\label{equ::1}
\sigma(\zeta_{n_i}^{g_i})=\sigma(\zeta_{4n}^{4ng_i/n_i})=\zeta_{4n}^{\eta(\sigma)\cdot 4ng_i/n_i}=\zeta_{n_i}^{\eta(\sigma)g_i}.
 \end{equation}
 Also note that
 \begin{equation}\label{equ::2}
\sigma(\mathbf{i})=\sigma(\zeta_4)=\sigma(\zeta_{4n}^n)=\zeta_{4n}^{\eta(\sigma)\cdot n}=\zeta_{4}^{\eta(\sigma)}=\mathbf{i}^{\eta(\sigma)}. 
 \end{equation}
 According to  \eqref{equ::1}, \eqref{equ::2} and Lemma \ref{lem::1},   for any $\sigma\in\mathrm{Gal}(\mathbb{Q}(\zeta_{4n})/\mathbb{Q})$ and  $g\in G$, we have
\begin{equation}\label{equ::3}
\begin{aligned}
\sigma(\lambda_g)&=\sigma(\chi_g(A))=\sigma\left(\sum_{a\in A}\prod_{i=1}^r\zeta_{n_i}^{g_ia_i}\right)=\sum_{a\in A}\prod_{i=1}^r\sigma(\zeta_{n_i}^{g_ia_i})\\
&=\sum_{a\in A}\prod_{i=1}^r\zeta_{n_i}^{\eta(\sigma)g_ia_i}=\chi_g(\eta(\sigma)A)
\end{aligned}
\end{equation}
and 
\begin{equation}\label{equ::4}
\begin{aligned}
\sigma(\mu_g)&=\sigma\left(\mathbf{i}\left(\chi_g(B)-\chi_g(-B)\right)\right)=\sigma\left(\mathbf{i}\left(\sum_{b\in B}\prod_{i=1}^r\zeta_{n_i}^{g_ibi}-\sum_{b\in B}\prod_{i=1}^r\zeta_{n_i}^{-g_ib_i}\right)\right)\\
&=\sigma(\mathbf{i})\left(\sum_{b\in B}\prod_{i=1}^r\sigma(\zeta_{n_i}^{g_ibi})-\prod_{i=1}^r\sigma(\zeta_{n_i}^{-g_ib_i})\right)\\
&=\mathbf{i}^{\eta(\sigma)}\left(\sum_{b\in B}\prod_{i=1}^r\zeta_{n_i}^{\eta(\sigma)g_ibi}-\prod_{i=1}^r\zeta_{n_i}^{-\eta(\sigma)g_ib_i}\right)\\
&=\mathbf{i}^{\eta(\sigma)}\left(\chi_g(\eta(\sigma)B)-\chi_g(-\eta(\sigma)B)\right),
\end{aligned}
\end{equation}
where $\eta(\sigma)A=\{\eta(\sigma)a\mid a\in A\}$ and $\eta(\sigma)B=\{\eta(\sigma)b\mid b\in B\}$. 

Let $F=\{k\in\mathbb{Z}_{4n}^*\mid k\equiv 1\pmod 4\}$. Clearly, $\mathbb{Z}_{4n}^*=F\cup (-F)$. We denote 
\begin{equation}\label{equ::5}
H=\eta(\mathrm{Gal}(\mathbb{Q}(\zeta_{4n})/\mathbb{K})),~~H_1=H\cap F~~\mbox{and}~~H_2=H\cap (-F).
\end{equation}
Then $H$ is a subgroup of $\mathbb{Z}_{4n}^*$ because $\mathrm{Gal}(\mathbb{Q}(\zeta_{4n})/\mathbb{K})$ is a subgroup of $\mathrm{Gal}(\mathbb{Q}(\zeta_{4n})/\mathbb{Q})$ and $\eta$ is an isomorphism from $\mathrm{Gal}(\mathbb{Q}(\zeta_{4n})/\mathbb{Q})$ to $\mathbb{Z}_{4n}^*$. 

The following two lemmas are crucial for the proof of our main result.

\begin{lemma}\label{lem::3}
With reference to the notations in Lemma \ref{lem::1}, we have $\lambda_g\in \mathbb{K}$ for all $g\in G$ if and only if $hA=A$ for all $h\in H$, where $H$ is given in  \eqref{equ::5}.
\end{lemma}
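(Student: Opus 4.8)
The plan is to run a chain of equivalences that converts the field-membership condition $\lambda_g\in\mathbb{K}$ first into a Galois-invariance condition, then into a character-sum identity, and finally into the set equality $hA=A$ via Fourier inversion on $G$. This turns an analytic/algebraic statement into a purely combinatorial one in three routine reductions, and since every step is an ``if and only if'', both directions of the lemma are handled at once.

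First I would invoke the fundamental theorem of Galois theory. Since $\mathbb{Q}(\zeta_{4n})/\mathbb{Q}$ is abelian (in particular Galois) and each $\lambda_g$ lies in $\mathbb{Q}(\zeta_{4n})$, the field $\mathbb{K}$ is precisely the fixed field of the subgroup $\mathrm{Gal}(\mathbb{Q}(\zeta_{4n})/\mathbb{K})$. Hence $\lambda_g\in\mathbb{K}$ holds if and only if $\sigma(\lambda_g)=\lambda_g$ for every $\sigma\in\mathrm{Gal}(\mathbb{Q}(\zeta_{4n})/\mathbb{K})$. Quantifying over all $g\in G$, the hypothesis ``$\lambda_g\in\mathbb{K}$ for all $g$'' becomes ``$\sigma(\lambda_g)=\lambda_g$ for all such $\sigma$ and all $g$''.

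Next I would substitute \eqref{equ::3}, which gives $\sigma(\lambda_g)=\chi_g(\eta(\sigma)A)$, so the invariance condition reads $\chi_g(\eta(\sigma)A)=\chi_g(A)$ for all $\sigma$ and all $g$. Because $\eta$ is an isomorphism and $H=\eta(\mathrm{Gal}(\mathbb{Q}(\zeta_{4n})/\mathbb{K}))$ by \eqref{equ::5}, this is exactly $\chi_g(hA)=\chi_g(A)$ for all $h\in H$ and all $g\in G$. At this point I should record that each $h\in H\subseteq\mathbb{Z}_{4n}^*$ satisfies $\gcd(h,n_i)=1$ (as $n_i\mid n\mid 4n$), so multiplication by $h$ permutes each $\mathbb{Z}_{n_i}$ and therefore acts as a bijection on $G$; in particular $hA$ is a genuine subset of $G$ with $|hA|=|A|$, which is what makes the equality $hA=A$ meaningful.

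Finally I would fix $h$ and apply Fourier inversion on $G$. Writing $\chi_g(X)=\sum_{x\in G}\delta_X(x)\chi_g(x)$, the assignment $X\mapsto(\chi_g(X))_{g\in G}$ is multiplication by the character table of $G$, an invertible matrix by orthogonality of the irreducible characters $\chi_g$. Thus $\chi_g(hA)=\chi_g(A)$ for all $g$ forces $\delta_{hA}=\delta_A$, i.e.\ $hA=A$, while the converse implication is immediate; ranging over $h\in H$ then closes the chain of equivalences. The only genuinely delicate point is this last reduction — that the characters $\chi_g$ separate subsets of $G$ — but it follows directly from the invertibility of the character table, so I anticipate no real obstacle. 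The remainder of the argument is bookkeeping of quantifiers together with the identification $H=\eta(\mathrm{Gal}(\mathbb{Q}(\zeta_{4n})/\mathbb{K}))$.
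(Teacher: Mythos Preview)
Your proposal is correct and follows essentially the same approach as the paper: both use \eqref{equ::3} to rewrite $\sigma(\lambda_g)$ as $\chi_g(\eta(\sigma)A)$, invoke the Galois correspondence to identify $\mathbb{K}$ with the fixed field of $\mathrm{Gal}(\mathbb{Q}(\zeta_{4n})/\mathbb{K})$, and then appeal to the invertibility of the character table (orthogonality of the $\chi_g$) to pass from $\chi_g(hA)=\chi_g(A)$ for all $g$ to $\delta_{hA}=\delta_A$. The only difference is cosmetic: you package the argument as a single chain of equivalences, while the paper treats sufficiency and necessity separately.
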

\begin{proof}
First we prove the sufficiency. For any  $\sigma\in\mathrm{Gal}(\mathbb{Q}(\zeta_{4n})/\mathbb{K})\subseteq \mathrm{Gal}(\mathbb{Q}(\zeta_{4n})/\mathbb{Q})$, we have $\eta(\sigma)\in H$, and  $\eta(\sigma)A=A$. Let $g\in G$. Then it follows from Lemma \ref{lem::1} and \eqref{equ::3} that
$$\sigma(\lambda_g)=\chi_g(\eta(\sigma)A)=\chi_g(A)=\lambda_g.$$
By the arbitrariness of $\sigma\in\mathrm{Gal}(\mathbb{Q}(\zeta_{4n})/\mathbb{K})$ and the fact that $\lambda_g\in \mathbb{Q}(\zeta_{4n})$, we must have $\lambda_g\in \mathbb{K}$, and the result follows.

Now consider the necessity. Let $h\in H$. Then there exists some $\sigma\in \mathrm{Gal}(\mathbb{Q}(\zeta_{4n})/\mathbb{K})\subseteq\mathrm{Gal}(\mathbb{Q}(\zeta_{4n})/\mathbb{Q})$ such that $\eta(\sigma)=h$. Since $\lambda_g\in \mathbb{K}$ for all $g\in G$, again by Lemma \ref{lem::1} and \eqref{equ::3}, we obtain 
\begin{equation}\label{equ::6}
\chi_g(hA)=\chi_g(\eta(\sigma)A)=\sigma(\lambda_g)=\lambda_g=\chi_g(A),~~\mbox{for all $g\in G$}.
\end{equation}
 Let $M$ denote the $|G|\times |G|$ matrix whose  $(x,y)$-entry is  $\chi_{x}(y)$ for $x,y\in G$. Then one can verify  that \eqref{equ::6} is actually equivalent to 
 $$M\delta_{hA}=M\delta_{A},$$ 
 where  $\delta_{hA}$ and $\delta_{A}$ are the characteristic vectors of $hA$ and $A$ over $G$, respectively. Note that $M$ is invertible by the orthogonal relations of irreducible characters of $G$ (see, for example, \cite{S77}). Therefore, we conclude that $\delta_{hA}=\delta_A$, and hence $hA=A$. By the arbitrariness of $h\in H$, the result follows.
\end{proof}

\begin{lemma}\label{lem::4}
With reference to the notations in Lemma \ref{lem::1}, we have $\mu_g\in \mathbb{K}$ for all $g\in G$ if and only if $h_1B=B$ for all  $h_1\in H_1$ and $h_2B=-B$ for all $h_2\in H_2$, where $H_1$ and $H_2$ are given in \eqref{equ::5}.
\end{lemma}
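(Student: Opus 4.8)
The plan is to imitate the proof of Lemma \ref{lem::3}, the essential new ingredient being the scalar $\mathbf{i}^{\eta(\sigma)}$ appearing in \eqref{equ::4}, whose value depends on the residue of $\eta(\sigma)$ modulo $4$. The decisive observation is the dichotomy: if $k\in F$ then $k\equiv 1\pmod 4$, so $\mathbf{i}^{k}=\mathbf{i}$, whereas if $k\in -F$ then $k\equiv -1\pmod 4$, so $\mathbf{i}^{k}=-\mathbf{i}$. Since $\mathbb{Z}_{4n}^*=F\cup(-F)$ we have $H=H_1\cup H_2$, and for any $\sigma\in\mathrm{Gal}(\mathbb{Q}(\zeta_{4n})/\mathbb{K})$ the element $h=\eta(\sigma)$ lands in exactly one of $H_1,H_2$. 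This is precisely what will produce the two distinct conclusions $h_1B=B$ and $h_2B=-B$.

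For the sufficiency direction I would fix $\sigma\in\mathrm{Gal}(\mathbb{Q}(\zeta_{4n})/\mathbb{K})$ and set $h=\eta(\sigma)\in H$. If $h\in H_1$, then $\mathbf{i}^{h}=\mathbf{i}$ and $hB=B$ (hence $-hB=-B$), so \eqref{equ::4} collapses to $\sigma(\mu_g)=\mathbf{i}(\chi_g(B)-\chi_g(-B))=\mu_g$. If $h\in H_2$, then $\mathbf{i}^{h}=-\mathbf{i}$ and $hB=-B$ (hence $-hB=B$); here the sign $-\mathbf{i}$ and the interchange of $B$ with $-B$ cancel, and \eqref{equ::4} again gives $\sigma(\mu_g)=-\mathbf{i}(\chi_g(-B)-\chi_g(B))=\mu_g$. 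Thus $\mu_g\in\mathbb{Q}(\zeta_{4n})$ is fixed by every element of $\mathrm{Gal}(\mathbb{Q}(\zeta_{4n})/\mathbb{K})$, so the Galois correspondence yields $\mu_g\in\mathbb{K}$.

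For the necessity direction I would assume $\mu_g\in\mathbb{K}$ for all $g$ and handle $h_1\in H_1$ and $h_2\in H_2$ separately, using that $\eta$ is onto $H$ to pick $\sigma$ with $\eta(\sigma)=h_j$. For $h_1$, comparing $\mu_g=\sigma(\mu_g)$ through \eqref{equ::4} with $\mathbf{i}^{h_1}=\mathbf{i}$ gives $\chi_g(h_1B)-\chi_g(-h_1B)=\chi_g(B)-\chi_g(-B)$ for all $g$; for $h_2$, the factor $\mathbf{i}^{h_2}=-\mathbf{i}$ yields $\chi_g(B)-\chi_g(-B)=\chi_g(-h_2B)-\chi_g(h_2B)$ for all $g$. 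Exactly as in Lemma \ref{lem::3}, each identity is equivalent, via the invertible character matrix $M$ with $(x,y)$-entry $\chi_x(y)$, to an equality of signed characteristic vectors, namely $\delta_{h_1B}-\delta_{-h_1B}=\delta_B-\delta_{-B}$ and $\delta_B-\delta_{-B}=\delta_{-h_2B}-\delta_{h_2B}$.

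The final step, and the one place where the hypothesis $B\cap(-B)=\emptyset$ is indispensable, is to recover the set equalities from these vector identities. Since $h_j\in\mathbb{Z}_{4n}^*$ acts on $G$ coordinatewise by a unit and $n_i\mid 4n$, multiplication by $h_j$ is a bijection of $G$; hence $B\cap(-B)=\emptyset$ forces $h_jB\cap(-h_jB)=\emptyset$. Consequently, for each $X\in\{B,h_1B,h_2B\}$ the vector $\delta_X-\delta_{-X}$ equals $+1$ exactly on $X$ and $-1$ exactly on $-X$, so comparing the supports of the $+1$-entries gives $h_1B=B$ and $B=-h_2B$, i.e.\ $h_2B=-B$. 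I expect the only delicate points to be this sign bookkeeping and the disjointness $h_jB\cap(-h_jB)=\emptyset$ needed to decode $X$ from $\delta_X-\delta_{-X}$; the invertibility of $M$ is already available from the orthogonality relations of the irreducible characters of $G$ used in Lemma \ref{lem::3}, and the remainder is a direct transcription of that argument.
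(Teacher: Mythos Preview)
Your proposal is correct and follows essentially the same route as the paper's own proof: the same dichotomy $\mathbf{i}^{h}=\pm\mathbf{i}$ according to $h\in H_1$ or $h\in H_2$, the same application of \eqref{equ::4} in both directions, and the same reduction via the invertible character matrix $M$ to an equality of signed characteristic vectors. Your explicit justification that multiplication by $h_j$ is a bijection of $G$ (hence $h_jB\cap(-h_jB)=\emptyset$) and your description of how to read off $X$ from the $+1$-support of $\delta_X-\delta_{-X}$ are a touch more detailed than the paper, which simply invokes $|B|=|h_1B|$ together with the two disjointness conditions, but the argument is the same.
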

\begin{proof}
First we  prove the sufficiency. Let $g\in G$ and $\sigma\in \mathrm{Gal}(\mathbb{Q}(\zeta_{4n})/\mathbb{K})\subseteq \mathrm{Gal}(\mathbb{Q}(\zeta_{4n})/\mathbb{Q})$. If $\eta(\sigma)\in H_1$, then  $\eta(\sigma)\in F=\{k\in\mathbb{Z}_{4n}^*\mid k\equiv 1\pmod 4\}$ and $\eta(\sigma) B=B$. By Lemma \ref{lem::1} and \eqref{equ::4}, we have
$$
\sigma(\mu_g)=\mathbf{i}^{\eta(\sigma)}\left(\chi_g(\eta(\sigma)B)-\chi_g(-\eta(\sigma)B)\right)=\mathbf{i}\left(\chi_g(B)-\chi_g(-B)\right)=\mu_g.
$$
If $\eta(\sigma)\in H_2$, then $\eta(\sigma)\in -F=\{k\in\mathbb{Z}_{4n}^*\mid k\equiv 3\pmod 4\}$, $\eta(\sigma) B=-B$,  and 
$$
\sigma(\mu_g)=\mathbf{i}^{\eta(\sigma)}\left(\chi_g(\eta(\sigma)B)-\chi_g(-\eta(\sigma)B)\right)=-\mathbf{i}\left(\chi_g(-B)-\chi_g(B)\right)=\mu_g.
$$
Thus $\sigma(\mu_g)=\mu_g$ for all $\sigma \in \mathrm{Gal}(\mathbb{Q}(\zeta_{4n})/\mathbb{K})$, and it follows that $\mu_g\in \mathbb{K}$. By the arbitrariness of $g\in G$, the result follows.

Now consider the necessity. Let $h_1\in H_1$. Then there exists some $\sigma\in \mathrm{Gal}(\mathbb{Q}(\zeta_{4n})/\mathbb{K})\subseteq \mathrm{Gal}(\mathbb{Q}(\zeta_{4n})/\mathbb{Q})$ such that $h_1=\eta(\sigma)$. Since $\mu_g\in \mathbb{K}$ for all $g\in G$, again by Lemma \ref{lem::1} and \eqref{equ::4}, we obtain
$$
\mathbf{i}^{h_1}(\chi_g(h_1B)-\chi_g(-h_1B))=\sigma(\mu_g)=\mu_g=\mathbf{i}\left(\chi_g(B)-\chi_g(-B)\right),~~\mbox{for all $g\in G$}.
$$
As $\mathbf{i}^{h_1}=\mathbf{i}$ due to $h_1\in F=\{k\in\mathbb{Z}_{4n}^*\mid k\equiv 1\pmod 4\}$, the above condition becomes
\begin{equation}\label{equ::7}
\chi_g(h_1B)-\chi_g(-h_1B)=\chi_g(B)-\chi_g(-B),~~\mbox{for all $g\in G$}.
\end{equation}
 Let $M$ denote the $|G|\times |G|$ matrix whose  $(x,y)$-entry is  $\chi_{x}(y)$ for $x,y\in G$. Then  \eqref{equ::7} can be written as $$M(\delta_{h_1B}-\delta_{-h_1B})=M(\delta_B-\delta_{-B}),$$ 
where $\delta_{h_1B}$, $\delta_{-h_1B}$, $\delta_B$ and $\delta_{-B}$ are the characteristic vectors of $h_1B$, $-h_1B$, $B$ and $-B$ over $G$, respectively. Similarly as in the proof of Lemma \ref{lem::3}, $M$ is invertible, and hence
$$
\delta_{h_1B}-\delta_{-h_1B}=\delta_B-\delta_{-B}.
$$
Since $|B|=|h_1B|$,  $B\cap (-B)=\emptyset$ and $h_1B\cap (-h_1B)=\emptyset$, the above equality implies that $h_1B=B$. Therefore, $h_1B=B$ for all $h_1\in H_1$. Analogically, one can prove that $h_2B=-B$ for all $h_2\in H_2$.
\end{proof}

Recall that $F=\{k\in\mathbb{Z}_{4n}^*\mid k\equiv 1\pmod 4\}$ and $\mathbb{Z}_{4n}^*=F\cup (-F)$. We denote
\begin{equation}\label{equ::8}
\mathcal{H}=\{h\in\mathbb{Z}_{4n}^*\mid hA=A;~hB=B~\mbox{if}~h\in F~\mbox{and}~hB=-B~\mbox{if}~h\in -F\}.
\end{equation}
Clearly,  $\mathcal{H}$ is a subgroup of $\mathbb{Z}_{4n}^*$, and hence $\eta^{-1}(\mathcal{H})$ is a subgroup of $\mathrm{Gal}(\mathbb{Q}(\zeta_{4n})/\mathbb{Q})$. 

With the help of  Lemmas \ref{lem::1}--\ref{lem::4}, we now prove the main theorem of this paper.

\begin{theorem}\label{thm::main}
Let $G$ be an abelian group (under addition) with  $exp(G)=n$, and let  $S=A\cup B$ be  a subset of $G\setminus \{0\}$ such that  $A=-A$ and $B\cap (-B)=\emptyset$.  Then the splitting field of the mixed Cayley graph $\Gamma=\mathrm{Cay}(G,S)$ is 
$$
\mathbb{SF}(\Gamma)=\mathbb{Q}(\zeta_{4n})^{\eta^{-1}(\mathcal{H})}:=\{x\in\mathbb{Q}(\zeta_{4n})\mid \sigma(x)=x~\mbox{for all}~\sigma\in\eta^{-1}(\mathcal{H})\},
$$
where $\mathcal{H}$ is given in \eqref{equ::8} and $\eta$ is the isomorphism from $\mathrm{Gal}(\mathbb{Q}(\zeta_{4n})/\mathbb{Q})$ to $\mathbb{Z}_{4n}^*$ defined  above. Furthermore, the algebraic degree of $\Gamma$ is $$\mathrm{deg}(\Gamma)=\varphi(4n)/|\mathcal{H}|,$$ where $\varphi(\cdot)$ is the Euler's totient function.
\end{theorem}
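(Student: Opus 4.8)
The plan is to exploit the Galois correspondence for the cyclotomic extension $\mathbb{Q}(\zeta_{4n})/\mathbb{Q}$. Since $\mathbb{Q}\subseteq\mathbb{SF}(\Gamma)\subseteq\mathbb{Q}(\zeta_{4n})$ as already observed, and since every intermediate field of the abelian extension $\mathbb{Q}(\zeta_{4n})/\mathbb{Q}$ is itself Galois over $\mathbb{Q}$, it suffices to pin down which subgroup of $\mathrm{Gal}(\mathbb{Q}(\zeta_{4n})/\mathbb{Q})\cong\mathbb{Z}_{4n}^*$ corresponds to $\mathbb{SF}(\Gamma)$. Because all eigenvalues $\gamma_g$ lie in $\mathbb{Q}(\zeta_{4n})$, the splitting field $\mathbb{SF}(\Gamma)$ equals the smallest intermediate field $\mathbb{K}$ containing every $\gamma_g$; equivalently, a field $\mathbb{K}$ with $\mathbb{Q}\subseteq\mathbb{K}\subseteq\mathbb{Q}(\zeta_{4n})$ satisfies $\mathbb{SF}(\Gamma)\subseteq\mathbb{K}$ if and only if $\gamma_g\in\mathbb{K}$ for all $g\in G$.

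First I would translate this membership condition into a condition on the subgroup $H=\eta(\mathrm{Gal}(\mathbb{Q}(\zeta_{4n})/\mathbb{K}))\leq\mathbb{Z}_{4n}^*$. By Lemma \ref{lem::2}, $\gamma_g\in\mathbb{K}$ for all $g$ if and only if $\lambda_g,\mu_g\in\mathbb{K}$ for all $g$. Lemma \ref{lem::3} rewrites the $\lambda$-condition as $hA=A$ for every $h\in H$, while Lemma \ref{lem::4} rewrites the $\mu$-condition as $h_1B=B$ for every $h_1\in H_1=H\cap F$ and $h_2B=-B$ for every $h_2\in H_2=H\cap(-F)$. Comparing these three requirements with the definition \eqref{equ::8} of $\mathcal{H}$, and using $\mathbb{Z}_{4n}^*=F\cup(-F)$, I would observe that they hold simultaneously precisely when every element of $H$ lies in $\mathcal{H}$; that is, $\gamma_g\in\mathbb{K}$ for all $g\in G$ if and only if $H\subseteq\mathcal{H}$.

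Next I would invoke the fundamental theorem of Galois theory. The map $\mathbb{K}\mapsto H=\eta(\mathrm{Gal}(\mathbb{Q}(\zeta_{4n})/\mathbb{K}))$ is an inclusion-reversing bijection between the intermediate fields and the subgroups of $\mathbb{Z}_{4n}^*$. Taking $\mathbb{K}_0=\mathbb{Q}(\zeta_{4n})^{\eta^{-1}(\mathcal{H})}$, whose associated subgroup is precisely $\mathcal{H}$ (a subgroup of $\mathbb{Z}_{4n}^*$ as noted after \eqref{equ::8}), the previous step shows $\mathbb{K}_0$ contains all $\gamma_g$, so $\mathbb{SF}(\Gamma)\subseteq\mathbb{K}_0$. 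Conversely, any intermediate field $\mathbb{K}$ containing all eigenvalues has $H\subseteq\mathcal{H}$, whence $\mathbb{K}=\mathbb{Q}(\zeta_{4n})^{\eta^{-1}(H)}\supseteq\mathbb{Q}(\zeta_{4n})^{\eta^{-1}(\mathcal{H})}=\mathbb{K}_0$ by inclusion-reversal. Thus $\mathbb{K}_0$ is the smallest such field, giving $\mathbb{SF}(\Gamma)=\mathbb{Q}(\zeta_{4n})^{\eta^{-1}(\mathcal{H})}$. For the degree I would then combine $[\mathbb{Q}(\zeta_{4n}):\mathbb{Q}]=\varphi(4n)$ with $[\mathbb{Q}(\zeta_{4n}):\mathbb{SF}(\Gamma)]=|\eta^{-1}(\mathcal{H})|=|\mathcal{H}|$ to obtain $\mathrm{deg}(\Gamma)=[\mathbb{SF}(\Gamma):\mathbb{Q}]=\varphi(4n)/|\mathcal{H}|$.

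Since Lemmas \ref{lem::2}--\ref{lem::4} already carry the analytic weight, I expect no serious obstacle here; the theorem is essentially an assembly step. The one point demanding care is the bookkeeping in the second paragraph: correctly matching the mixed conditions of Lemma \ref{lem::4} (where the $B$-orbit flips to $-B$ on the coset $-F$) against the definition of $\mathcal{H}$, and then remembering that the Galois correspondence is \emph{inclusion-reversing}, so that the minimal splitting field corresponds to the maximal admissible subgroup $\mathcal{H}$ rather than a minimal one.
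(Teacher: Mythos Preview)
Your proposal is correct and follows essentially the same approach as the paper: both arguments combine Lemmas \ref{lem::2}--\ref{lem::4} with the Galois correspondence for $\mathbb{Q}(\zeta_{4n})/\mathbb{Q}$, first showing that $\mathbb{K}_0=\mathbb{Q}(\zeta_{4n})^{\eta^{-1}(\mathcal{H})}$ contains every $\gamma_g$ (hence $\mathbb{SF}(\Gamma)\subseteq\mathbb{K}_0$), and then showing that the subgroup attached to any field containing all eigenvalues---in particular $\mathbb{SF}(\Gamma)$---is contained in $\mathcal{H}$ (hence $\mathbb{K}_0\subseteq\mathbb{SF}(\Gamma)$). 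Your packaging of the argument as a single biconditional ``$\gamma_g\in\mathbb{K}$ for all $g$ $\Longleftrightarrow$ $H\subseteq\mathcal{H}$'' is slightly crisper than the paper's two-step presentation, but the content is identical.
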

\begin{proof}
Let $\mathbb{K}=\mathbb{Q}(\zeta_{4n})^{\eta^{-1}(\mathcal{H})}$. Clearly, $\eta^{-1}(\mathcal{H})=\mathrm{Gal}(\mathbb{Q}(\zeta_{4n})/\mathbb{K})$, or equivalently, $\mathcal{H}=\eta(\mathrm{Gal}(\mathbb{Q}(\zeta_{4n})/\mathbb{K}))$. According to the definition of $\mathcal{H}$, we have $hA=A$ for all $h\in\mathcal{H}$. By Lemma \ref{lem::3},  $\lambda_g\in \mathbb{K}$ for all $g\in G$. Also, $h_1B=B$  for all $h_1\in\mathcal{H}_1=\mathcal{H}\cap F$ and  $h_2B=-B$ for all $h_2\in\mathcal{H}_2=\mathcal{H}\cap (-F)$, and it follows from  Lemma \ref{lem::4}  that  $\mu_g\in \mathbb{K}$ for all $g\in G$. Hence, by Lemma \ref{lem::2}, $\gamma_g\in \mathbb{K}$ for all $g\in G$. This implies that $\mathbb{K}$ contains all Hermitian eigenvalues of $\Gamma$, and so $\mathbb{SF}(\Gamma)\subseteq \mathbb{K}$.  

On the other hand, since  $\mathbb{SF}(\Gamma)$ contains all Hermitian eigenvalues of $\Gamma$,  by Lemma \ref{lem::2},  $\lambda_g,\mu_g\in \mathbb{SF}(\Gamma)$ for all $g\in G$. Let $H=\eta(\mathrm{Gal}(\mathbb{Q}(\zeta_{4n})/\mathbb{SF}(\Gamma)))$. Clearly, $H$ is a subgroup of $\mathbb{Z}_{4n}^*$ because $\mathrm{Gal}(\mathbb{Q}(\zeta_{4n})/\mathbb{SF}(\Gamma))$ is a subgroup of $\mathrm{Gal}(\mathbb{Q}(\zeta_{4n})/\mathbb{Q})$. For any $h\in H$, according to Lemmas \ref{lem::3} and \ref{lem::4}, we have $hA=A$, and $hB=B$ if $h\in F$, $hB=-B$ if $h\in -F$. Thus, $H$ is a subgroup of $\mathcal{H}$, or equivalently, $\eta^{-1}(H)=\mathrm{Gal}(\mathbb{Q}(\zeta_{4n})/\mathbb{SF}(\Gamma))$  is a subgroup of $\eta^{-1}(\mathcal{H})=\mathrm{Gal}(\mathbb{Q}(\zeta_{4n})/\mathbb{K})$. Hence, $\mathbb{K}\subseteq \mathbb{SF}(\Gamma)$. 

Therefore, we may conclude that $\mathbb{SF}(\Gamma)=\mathbb{K}=\mathbb{Q}(\zeta_{4n})^{\eta^{-1}(\mathcal{H})}$. Moreover, the algebraic degree of $\Gamma$ is 
$$
\mathrm{deg}(\Gamma)=[\mathbb{Q}(\zeta_{4n})^{\eta^{-1}(\mathcal{H})}:\mathbb{Q}]=\frac{[\mathbb{Q}(\zeta_{4n}): \mathbb{Q}]}{[\mathbb{Q}(\zeta_{4n}):\mathbb{Q}(\zeta_{4n})^{\eta^{-1}(\mathcal{H})}]}=\frac{|\mathbb{Z}_{4n}^*|}{|\eta^{-1}(\mathcal{H})|}=\frac{\varphi(4n)}{|\mathcal{H}|},
$$
where $\varphi(\cdot)$ is the Euler's totient function.
\end{proof}

In the remaining part of this section, we will present some applications of Theorem \ref{thm::main}. Before going further, we need the following  powerful lemma, whose proof comes from \cite{Q17}.
\begin{lemma}[\cite{Q17}]\label{lem::5}
Let $n$ and $m$ be  positive integers. For every $h\in\mathbb{Z}_{mn}^*$, there exists some $h'\in \mathbb{Z}_n^*$ such that $h'\equiv h\pmod n$. Conversely, for every $h'\in\mathbb{Z}_n^*$, there exists some $h\in\mathbb{Z}_{mn}^*$ such that $h\equiv h'\pmod {n}$. 
\end{lemma}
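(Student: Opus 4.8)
The plan is to read both assertions through the natural reduction-of-units map. Reduction modulo $n$ induces a ring epimorphism $\mathbb{Z}_{mn}\to\mathbb{Z}_n$; restricting it to the groups of units yields a map $\pi:\mathbb{Z}_{mn}^*\to\mathbb{Z}_n^*$, and the two halves of the lemma are exactly the statements that $\pi$ is well-defined (sends units to units) and that $\pi$ is surjective. I would organize the proof around these two points.

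First I would dispose of the forward direction. If $h\in\mathbb{Z}_{mn}^*$, then $\gcd(h,mn)=1$, and since $n\mid mn$, coprimality to the multiple $mn$ forces $\gcd(h,n)=1$. Hence the residue $h':=h\bmod n$ lies in $\mathbb{Z}_n^*$ and satisfies $h'\equiv h\pmod n$. This requires nothing beyond the observation that coprimality to a number is inherited by its divisors.

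The substance lies in the converse, i.e.\ the surjectivity of $\pi$. Given $h'\in\mathbb{Z}_n^*$, I would fix an integer representative $a$ of $h'$ with $\gcd(a,n)=1$ and look for $t\in\mathbb{Z}$ so that $h:=a+tn$ is coprime to $mn$; its class modulo $mn$ is then the desired lift, since $h\equiv a\equiv h'\pmod n$ by construction. For any prime $p\mid n$ one has $p\nmid h$ automatically, because $h\equiv a\pmod p$ and $p\nmid a$. The only primes that can obstruct coprimality to $mn$ are those $p\mid m$ with $p\nmid n$. For each such $p$, the fact that $\gcd(n,p)=1$ makes $n$ invertible modulo $p$, so avoiding $p\mid a+tn$ amounts to the single excluded congruence $t\not\equiv -a\,n^{-1}\pmod p$, where $n^{-1}$ denotes the inverse of $n$ modulo $p$; this rules out exactly one residue class modulo $p$, leaving $p-1\geq 1$ admissible ones. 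Because these finitely many bad primes are distinct, the Chinese Remainder Theorem lets me pick a single $t$ realizing an admissible residue modulo each of them simultaneously, and then $\gcd(a+tn,mn)=1$.

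The main, and essentially the only, obstacle is bookkeeping the coprimality conditions prime by prime: the primes dividing $n$ are handled for free by the choice of representative, while the primes dividing $m$ but not $n$ must each be dodged by the free parameter $t$, which is possible precisely because each such prime excludes only one class. As a fully self-contained alternative I could invoke Dirichlet's theorem on primes in arithmetic progressions to produce a prime $q\equiv a\pmod n$ with $q>mn$, whence $\gcd(q,mn)=1$ and $q\bmod mn$ lifts $h'$; but the CRT argument above is more elementary and avoids that machinery, so I would present it as the primary route.
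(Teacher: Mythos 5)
Your proof is correct and takes essentially the same route as the paper: both establish the converse by lifting $h'$ along the translates $h'+tn$ and exploiting that $n$ is invertible modulo the part of $mn$ coprime to $n$. The only difference is bookkeeping --- the paper gathers all the obstructing primes into the single maximal divisor $d$ of $mn$ with $\gcd(d,n)=1$ and notes that $h', h'+n, \ldots, h'+(d-1)n$ realize all residues modulo $d$ (so one of them is $\equiv 1 \pmod d$), whereas you exclude one residue class of $t$ for each prime $p\mid m$, $p\nmid n$ and combine the choices by the Chinese Remainder Theorem; the two variants are equally elementary and equally complete.
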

\begin{proof}
The former statement of the lemma is obvious. We only need to consider  the later one. Suppose  $h'\in \mathbb{Z}_n^*$. For $i\in\{0,1,\ldots,m-1\}$, let $h_i=h'+in$. Clearly, $h_i\equiv h'\pmod n$.  It suffices to show that there exists some $i_0\in\{0,1,\ldots,m-1\}$ such that $h_{i_0}\in\mathbb{Z}_{mn}^*$, i.e., $\mathrm{gcd}(h_{i_0},mn)=1$. Let $d$ be the largest positive divisor of $mn$ such that $\mathrm{gcd}(d,n)=1$. If $d=1$, then $mn$ and $n$ share the same prime divisors, and we can take $i_0=0$. Now suppose $d>1$.  Since  $\mathrm{gcd}(d,n)=1$, we have $d\mid m$, and hence $d\leq m$. We claim that $h_i\not\equiv h_j\pmod d$ for  $0\leq i<j\leq d-1$. In fact, if $h_i\equiv h_j\pmod d$, then $d\mid (j-i)n$, and hence $d\mid (j-i)$, contrary to $0<j-i<d$. Since $h_0,h_1,\ldots,h_{d-1}$ are distinct (module $d$), there exists some $i_0\in \{0,1,\ldots,d-1\}\subseteq \{0,1,\ldots,m-1\}$ such that $h_{i_0}\equiv 1\pmod d$, i.e., $\mathrm{gcd}(h_{i_0},d)=1$. Also note that $\mathrm{gcd}(h_{i_0},n)=\mathrm{gcd}(h',n)=1$. Therefore, $\mathrm{gcd}(h_{i_0},mn)=1$, as desired.
\end{proof}

Under the assumption of Theorem \ref{thm::main}, if  $B$ is empty, then $S=A=-A=-S$, and  $\Gamma=\mathrm{Cay}(G,S)$ would be  an undirected Cayley graph. By Lemma \ref{lem::1}, all  eigenvalues of  $\Gamma$ are contained in $\mathbb{Q}(\zeta_n)$. As above, let $\eta'$ denote the group isomorphsim from $\mathrm{Gal}(\mathbb{Q}(\zeta_n)/\mathbb{Q})$ to $\mathbb{Z}_n^*$ such that $\sigma(\zeta_n)=\zeta_n^{\eta'(\sigma)}$ for all $\sigma\in\mathrm{Gal}(\mathbb{Q}(\zeta_n)/\mathbb{Q})$. 
Note  that $\mathbb{Z}_n^*$ also acts on $G=\mathbb{Z}_{n_1}\otimes \mathbb{Z}_{n_2}\otimes\cdots\otimes\mathbb{Z}_{n_r}$ by setting $kg=k(g_1,\ldots,g_r)=(kg_1,\ldots,kg_r)$ for any $k\in\mathbb{Z}_n^*$ and $g\in G$. 

As an application of  Theorem \ref{thm::main}, we give the splitting fields and algebraic degrees of undirected Cayley graphs over abelian groups.

\begin{theorem}\label{thm::2}
Let $G$ be an abelian group (under addition) with  $exp(G)=n$, and let $S$ be a subset of $G\setminus\{0\}$ such that $S=-S$. Suppose that $\mathcal{H}=\{h\in\mathbb{Z}_{4n}^*\mid hS=S\}$ and $\mathcal{H}'=\{h'\in\mathbb{Z}_n^*\mid h'S=S\}$. Then  the splitting field of $\Gamma=\mathrm{Cay}(G,S)$ is
$$
\begin{aligned}
\mathbb{SF}(\Gamma)&=\mathbb{Q}(\zeta_{4n})^{\eta^{-1}(\mathcal{H})}:=\{x\in\mathbb{Q}(\zeta_{4n})\mid \sigma(x)=x~\mbox{for all}~\sigma\in\eta^{-1}(\mathcal{H})\}\\
&=\mathbb{Q}(\zeta_{n})^{{\eta'}^{-1}(\mathcal{H}')}:=\{x\in\mathbb{Q}(\zeta_{n})\mid \sigma'(x)=x~\mbox{for all}~\sigma'\in\eta^{-1}(\mathcal{H'})\},
\end{aligned}
$$
where $\eta$ and $\eta'$ are defined above.  Furthermore, the algebraic degree of $\Gamma$ is 
$$
\mathrm{deg}(\Gamma)=\varphi(4n)/|\mathcal{H}|=\varphi(n)/|\mathcal{H}'|,
$$
where $\varphi(\cdot)$ is the Euler's totient function.
\end{theorem}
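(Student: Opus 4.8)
The plan is to obtain Theorem~\ref{thm::2} as a specialization of Theorem~\ref{thm::main}, and then to translate the resulting description of $\mathbb{SF}(\Gamma)$ from the field $\mathbb{Q}(\zeta_{4n})$ down to its subfield $\mathbb{Q}(\zeta_n)$. Since $S=-S$, I would set $A=S$ and $B=\emptyset$ in the notation of Theorem~\ref{thm::main}; the conditions involving $B$ in the definition \eqref{equ::8} of $\mathcal{H}$ then become vacuous, so that $\mathcal{H}=\{h\in\mathbb{Z}_{4n}^*\mid hS=S\}$ coincides with the set $\mathcal{H}$ of the present theorem. Applying Theorem~\ref{thm::main} directly yields $\mathbb{SF}(\Gamma)=\mathbb{Q}(\zeta_{4n})^{\eta^{-1}(\mathcal{H})}$ together with $\mathrm{deg}(\Gamma)=\varphi(4n)/|\mathcal{H}|$, giving the first line of each assertion.

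For the second description, I would first note that with $B=\emptyset$ Lemma~\ref{lem::1} gives $\gamma_g=\lambda_g\in\mathbb{Q}(\zeta_n)$ for all $g$, whence $\mathbb{SF}(\Gamma)\subseteq\mathbb{Q}(\zeta_n)$. Because $\mathbb{Q}(\zeta_n)/\mathbb{Q}$ is abelian, $\mathbb{SF}(\Gamma)$ is the fixed field of the subgroup ${\eta'}^{-1}(H')$ of $\mathrm{Gal}(\mathbb{Q}(\zeta_n)/\mathbb{Q})$, where $H'=\eta'(\mathrm{Gal}(\mathbb{Q}(\zeta_n)/\mathbb{SF}(\Gamma)))$, and it remains to identify $H'$ with $\mathcal{H}'$. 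Repeating the computation \eqref{equ::3} verbatim but over $\mathbb{Q}(\zeta_n)$, with $\zeta_{4n}$ replaced by $\zeta_n$ and $\eta$ by $\eta'$, gives $\sigma'(\lambda_g)=\chi_g(\eta'(\sigma')S)$ for every $\sigma'\in\mathrm{Gal}(\mathbb{Q}(\zeta_n)/\mathbb{Q})$. Thus $\sigma'$ fixes all eigenvalues, equivalently fixes $\mathbb{SF}(\Gamma)$ pointwise, if and only if $\chi_g(\eta'(\sigma')S)=\chi_g(S)$ for all $g$; by the invertibility of the character matrix $M$ used in the proof of Lemma~\ref{lem::3}, this is equivalent to $\eta'(\sigma')S=S$, i.e.\ $\eta'(\sigma')\in\mathcal{H}'$. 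Hence $H'=\mathcal{H}'$, and the Galois correspondence delivers $\mathbb{SF}(\Gamma)=\mathbb{Q}(\zeta_n)^{{\eta'}^{-1}(\mathcal{H}')}$ and $\mathrm{deg}(\Gamma)=\varphi(n)/|\mathcal{H}'|$.

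The step I expect to require the most care is reconciling the two index formulas $\varphi(4n)/|\mathcal{H}|$ and $\varphi(n)/|\mathcal{H}'|$ directly, which is where Lemma~\ref{lem::5} enters. The key observation is that the action of $\mathbb{Z}_{4n}^*$ on $G$ factors through reduction modulo $n$: since each $n_i$ divides $n$, for $s\in G$ the element $hs=(hs_1,\ldots,hs_r)$, and hence the set $hS$, depends on $h$ only through its residue modulo $n$. Consequently, writing $\pi:\mathbb{Z}_{4n}^*\to\mathbb{Z}_n^*$ for reduction modulo $n$, one has $hS=S$ if and only if $\pi(h)S=S$, so $\mathcal{H}=\pi^{-1}(\mathcal{H}')$ and $\ker\pi\subseteq\mathcal{H}$. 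Lemma~\ref{lem::5} guarantees that $\pi$ is surjective, hence $|\ker\pi|=\varphi(4n)/\varphi(n)$ and $|\mathcal{H}|=|\ker\pi|\cdot|\mathcal{H}'|$, which yields $\varphi(4n)/|\mathcal{H}|=\varphi(n)/|\mathcal{H}'|$ and confirms that the two degree expressions agree. This surjectivity of reduction between the unit groups is precisely the nontrivial content supplied by Lemma~\ref{lem::5}; everything else is a routine transfer of the argument of Theorem~\ref{thm::main} from $\mathbb{Q}(\zeta_{4n})$ to $\mathbb{Q}(\zeta_n)$.
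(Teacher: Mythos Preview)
Your argument is correct, and it reaches the conclusion by a slightly different path than the paper. The paper obtains the first description from Theorem~\ref{thm::main} exactly as you do, but then establishes the second description indirectly: it shows the containment $\eta^{-1}(\mathcal{H})\le \mathrm{Gal}(\mathbb{Q}(\zeta_{4n})/\mathbb{Q}(\zeta_n)^{{\eta'}^{-1}(\mathcal{H}')})$ and then proves equality by a cardinality count, using Lemma~\ref{lem::5} together with the orbit--stabilizer theorem to show $\varphi(4n)/|\mathcal{H}|=\varphi(n)/|\mathcal{H}'|$. You instead derive the $\mathbb{Q}(\zeta_n)$--description directly, by rerunning the argument of Lemma~\ref{lem::3} over $\mathbb{Q}(\zeta_n)$ to identify $\mathrm{Gal}(\mathbb{Q}(\zeta_n)/\mathbb{SF}(\Gamma))$ with ${\eta'}^{-1}(\mathcal{H}')$; the equality of the two fixed fields and of the two degree formulas then follows automatically since both have already been shown to equal $\mathbb{SF}(\Gamma)$. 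In particular, in your approach Lemma~\ref{lem::5} and the kernel computation in your third paragraph are a pleasant consistency check but are not logically required, whereas in the paper's proof they are essential. Your route is more self-contained at the cost of repeating the character-matrix argument; the paper's route avoids that repetition but leans on the orbit comparison.
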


\begin{proof}
By Theorem \ref{thm::main}, it suffices to prove that 
\begin{equation}\label{equ::9}
\mathrm{Gal}(\mathbb{Q}(\zeta_{4n})/\mathbb{Q}(\zeta_n)^{{\eta'}^{-1}(\mathcal{H}')})=\eta^{-1}(\mathcal{H}).
\end{equation}
Let $\sigma\in\eta^{-1}(\mathcal{H})$. Then  $\sigma(\zeta_{n})=\sigma(\zeta_{4n}^4)=\zeta_{4n}^{4\eta(\sigma)}=\zeta_{n}^{\eta(\sigma)}$. Since $\eta(\sigma)\in\mathbb{Z}_{4n}^*$, by Lemma \ref{lem::5}, there exists some $k_\sigma'\in\mathbb{Z}_n^*$ such that  $\eta(\sigma)\equiv k_\sigma'\pmod n$, and hence $\zeta_n^{\eta(\sigma)}=\zeta_n^{k_\sigma'}$. This implies that  $\sigma|_{\mathbb{Q}(\zeta_n)}=\sigma':={\eta'}^{-1}(k_\sigma')$. Also note that  $k_\sigma'S=\eta(\sigma)S=S$ because $\eta(\sigma)\equiv k_\sigma'\pmod n$. Then  $\sigma(x)=\sigma'(x)=x$ for all $x\in\mathbb{Q}(\zeta_n)^{{\eta'}^{-1}(\mathcal{H}')}$, and hence $\sigma\in \mathrm{Gal}(\mathbb{Q}(\zeta_{4n})/\mathbb{Q}(\zeta_n)^{{\eta'}^{-1}(\mathcal{H}')})$. Hence, by the arbitrariness of  $\sigma\in\eta^{-1}(\mathcal{H})$, we obtain
\begin{equation}\label{equ::10}
\eta^{-1}(\mathcal{H})\le\mathrm{Gal}(\mathbb{Q}(\zeta_{4n})/\mathbb{Q}(\zeta_n)^{{\eta'}^{-1}(\mathcal{H}')}).
\end{equation}
Let  $\mathcal{P}(G)$ denote the power set of $G$, i.e.,  the collection of all subsets of $G$. Let $\mathbb{Z}_{4n}^*$ and $\mathbb{Z}_{n}^*$ act on $G$ as mentioned above. Then they  act on $\mathcal{P}(G)$ naturally.  By Lemma \ref{lem::5},  it is easy to see that the actions of $\mathbb{Z}_{4n}^*$ and $\mathbb{Z}_n^*$ on $\mathcal{P}(G)$ share the same orbits. Let $\mathcal{O}_S$ denote the common orbit of $\mathbb{Z}_{4n}^*$ and $\mathbb{Z}_{n}^*$ containing $S\in \mathcal{P}(G)$. Note that  $\mathcal{H}$ and $\mathcal{H}'$ are actually the stabilizer subgroups of $\mathbb{Z}_{4n}^*$ and $\mathbb{Z}_{n}^*$ with respect to $S$, respectively. Therefore, by the orbit-stabilizer theorem,
\begin{equation}\label{equ::11}
\frac{\varphi(4n)}{|\mathcal{H}|}=\frac{|\mathbb{Z}_{4n}^*|}{|\mathcal{H}|}=|\mathcal{O}_S|=\frac{|\mathbb{Z}_{n}^*|}{|\mathcal{H}'|}=\frac{\varphi(n)}{|\mathcal{H}'|}.
\end{equation}
Then 
\begin{equation}\label{equ::12}
\begin{aligned}
|\mathrm{Gal}(\mathbb{Q}(\zeta_{4n})/\mathbb{Q}(\zeta_n)^{{\eta'}^{-1}(\mathcal{H}')})|&=\left[\mathbb{Q}(\zeta_{4n}):\mathbb{Q}(\zeta_n)^{{\eta'}^{-1}(\mathcal{H}')}\right]\\
&=\left[\mathbb{Q}(\zeta_{4n}):\mathbb{Q}(\zeta_n)\right]\cdot \left[\mathbb{Q}(\zeta_{n}):\mathbb{Q}(\zeta_n)^{{\eta'}^{-1}(\mathcal{H}')}\right]\\
&=\frac{\varphi(4n)}{\varphi(n)}|\mathcal{H}'|\\
&=|\mathcal{H}|,
\end{aligned}
\end{equation}
where the last equality follows from \eqref{equ::11}.  Combining \eqref{equ::10} and \eqref{equ::12}, we  obtain  \eqref{equ::9} immediately. Furthermore, by Theorem \ref{thm::main} and \eqref{equ::11}, the algebraic degree of $\Gamma$ is 
$\mathrm{deg}(\Gamma)=\varphi(4n)/|\mathcal{H}|=\varphi(n)/|\mathcal{H}'|$, as desired.
\end{proof}

By Theorem \ref{thm::2}, we obtain the following result due to M\"{o}nius \cite{M22} immediately.
\begin{corollary}[\cite{M22}]\label{cor::1}
Let $S$ be a subset of $\mathbb{Z}_n\setminus\{0\}$ such that $S=-S$, and let $\mathcal{H}'=\{h'\in\mathbb{Z}_n^*\mid h'S=S\}$. Then  the splitting field of $\Gamma=\mathrm{Cay}(\mathbb{Z}_n,S)$ is
$$
\mathbb{SF}(\Gamma)=\mathbb{Q}(\zeta_{n})^{{\eta'}^{-1}(\mathcal{H}')}:=\{x\in\mathbb{Q}(\zeta_{n})\mid \sigma'(x)=x~\mbox{for all}~\sigma'\in\eta^{-1}(\mathcal{H'})\},
$$
where  $\eta'$ is defined above.  Furthermore, the algebraic degree of $\Gamma$ is 
$$
\mathrm{deg}(\Gamma)=\varphi(n)/|\mathcal{H}'|,
$$
where $\varphi(\cdot)$ is the Euler's  totient function.
\end{corollary}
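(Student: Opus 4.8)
The plan is to derive this statement as the cyclic special case of Theorem \ref{thm::2}. First I would observe that $\mathbb{Z}_n$ is an abelian group under addition containing an element of order $n$, so its exponent is $exp(\mathbb{Z}_n)=n$. Writing $n=p_1^{a_1}\cdots p_k^{a_k}$ as a product of distinct prime powers, the Chinese Remainder Theorem furnishes an isomorphism $\mathbb{Z}_n\cong\mathbb{Z}_{p_1^{a_1}}\otimes\cdots\otimes\mathbb{Z}_{p_k^{a_k}}$ of the form \eqref{equ::0}, under which the multiplicative action of $\mathbb{Z}_n^*$ on $\mathbb{Z}_n$ agrees with the coordinatewise action $kg=(kg_1,\ldots,kg_r)$ used in Theorem \ref{thm::2}; in particular the stabilizer $\mathcal{H}'=\{h'\in\mathbb{Z}_n^*\mid h'S=S\}$ is unaffected by this identification.

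Next I would note that the hypothesis $S=-S$ is exactly the hypothesis of Theorem \ref{thm::2} (equivalently, in the notation of Theorem \ref{thm::main}, one takes $A=S$ and $B=\emptyset$). With $G=\mathbb{Z}_n$ and $exp(G)=n$, Theorem \ref{thm::2} then applies verbatim and produces two descriptions of $\mathbb{SF}(\Gamma)$, one over $\mathbb{Q}(\zeta_{4n})$ via $\mathcal{H}$ and one over $\mathbb{Q}(\zeta_n)$ via $\mathcal{H}'$. The second description,
$$
\mathbb{SF}(\Gamma)=\mathbb{Q}(\zeta_{n})^{{\eta'}^{-1}(\mathcal{H}')},
$$
is precisely the assertion of the corollary, and the degree formula $\mathrm{deg}(\Gamma)=\varphi(n)/|\mathcal{H}'|$ is read off from the corresponding part of Theorem \ref{thm::2}.

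Since the corollary is a direct specialization, I do not expect any substantive obstacle; the only point requiring care is that the combinatorial datum $\mathcal{H}'$ and the field isomorphism $\eta'\colon\mathrm{Gal}(\mathbb{Q}(\zeta_n)/\mathbb{Q})\to\mathbb{Z}_n^*$ transfer without change under the identification above. This is immediate, because both the action of $\mathbb{Z}_n^*$ and the isomorphism $\eta'$ depend only on the cyclic group $\mathbb{Z}_n$ itself and not on any chosen product decomposition.
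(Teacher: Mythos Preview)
Your proposal is correct and matches the paper's own approach: the paper simply states that Corollary~\ref{cor::1} follows immediately from Theorem~\ref{thm::2}, and your argument just makes explicit the verification that $\mathbb{Z}_n$ (with $exp(\mathbb{Z}_n)=n$) is an instance of that theorem, along with the compatibility of the $\mathbb{Z}_n^*$-action under the CRT decomposition.
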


As another application of Theorem \ref{thm::main}, we provide a characterization for integral mixed Cayley graphs over abelian groups.

\begin{theorem}\label{thm::3}
Let $G$ be an abelian group (under addition) with $exp(G)=n$, and let  $S=A\cup B$ be  a subset of $G\setminus \{0\}$ such that  $A=-A$ and $B\cap (-B)=\emptyset$.  Let $F=\{k\in\mathbb{Z}_{4n}^*\mid k\equiv 1\pmod 4\}$. Then the  mixed Cayley graph $\Gamma=\mathrm{Cay}(G,S)$ is integral if and only if  $kA=A$ for all $k\in \mathbb{Z}_{4n}^*$ and $k_1B=B$ for all $k_1\in F$, or equivalently, $A$ is a union of some orbits of $\mathbb{Z}_{4n}^*$ acting on $G$ and  $B$ is a union of some orbits of $F$ acting on $G$.
\end{theorem}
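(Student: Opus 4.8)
The plan is to read the claim off directly from the algebraic-degree formula in Theorem \ref{thm::main}. Since $\Gamma$ is integral precisely when $\mathrm{deg}(\Gamma)=1$, and Theorem \ref{thm::main} gives $\mathrm{deg}(\Gamma)=\varphi(4n)/|\mathcal{H}|$ with $\mathcal{H}\subseteq\mathbb{Z}_{4n}^*$ and $|\mathbb{Z}_{4n}^*|=\varphi(4n)$, integrality is equivalent to $|\mathcal{H}|=\varphi(4n)$, i.e.\ to $\mathcal{H}=\mathbb{Z}_{4n}^*$. Thus the whole theorem reduces to unpacking the defining conditions of $\mathcal{H}$ in \eqref{equ::8} in the case where $\mathcal{H}$ exhausts all of $\mathbb{Z}_{4n}^*$.

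Unpacking \eqref{equ::8}, the equality $\mathcal{H}=\mathbb{Z}_{4n}^*$ asserts three things simultaneously: every $k\in\mathbb{Z}_{4n}^*$ satisfies $kA=A$; every $k_1\in F$ satisfies $k_1B=B$; and every $k_2\in -F$ satisfies $k_2B=-B$. The forward implication of the theorem is then immediate by simply restricting these conditions. For the converse I would assume only that $kA=A$ for all $k\in\mathbb{Z}_{4n}^*$ and $k_1B=B$ for all $k_1\in F$, and then recover the $-F$ condition, which is the single point requiring genuine verification. The key observation is that $-1\equiv 3\pmod 4$ lies in $-F$ and that $-F=(-1)F$, so any $k_2\in -F$ can be written as $k_2=-k_1$ with $k_1=-k_2\in F$; then $k_2B=-(k_1B)=-B$, as required. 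Hence the two displayed conditions on $A$ and $B$ are exactly equivalent to $\mathcal{H}=\mathbb{Z}_{4n}^*$.

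Finally, the reformulation in terms of orbits is the standard fact that a subset of a $G$-set is fixed setwise by the entire acting group precisely when it is a union of orbits: the condition $kA=A$ for all $k\in\mathbb{Z}_{4n}^*$ says $A$ is $\mathbb{Z}_{4n}^*$-invariant, hence a union of $\mathbb{Z}_{4n}^*$-orbits on $G$, and likewise $k_1B=B$ for all $k_1\in F$ says $B$ is a union of $F$-orbits on $G$. I expect no serious obstacle here, since the argument is essentially a direct corollary of Theorem \ref{thm::main}; the only step meriting care is confirming that the apparently additional constraint $k_2B=-B$ for $k_2\in -F$ is not an independent hypothesis but is forced by the $F$-invariance of $B$ together with the factorization $-F=(-1)F$.
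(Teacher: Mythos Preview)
Your proposal is correct and follows essentially the same approach as the paper: both reduce integrality to the condition $\mathcal{H}=\mathbb{Z}_{4n}^*$ via the degree formula in Theorem~\ref{thm::main} and then unpack the definition~\eqref{equ::8}. In fact you are slightly more explicit than the paper, which simply asserts ``then $\mathcal{H}=\mathbb{Z}_{4n}^*$ by~\eqref{equ::8}'' without spelling out the observation $-F=(-1)F$ that you correctly identify as the one point needing verification.
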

\begin{proof}

If $kA=A$ for all $k\in \mathbb{Z}_{4n}^*$ and $k_1B=B$ for all $k_1\in F$, then  $\mathcal{H}=\mathbb{Z}_{4n}^*$ by \eqref{equ::8}, and it follows from Theorem \ref{thm::main} that  $\mathrm{deg}(\Gamma)=1$. Conversely, if $\mathrm{deg}(\Gamma)=1$, again by Theorem \ref{thm::main}, we have $\mathcal{H}=\mathbb{Z}_{4n}^*$, and the result follows.
\end{proof}

Suppose that $G$ is an abelian group (under addition)  with $exp(G)=n$. For any $g\in G$, we denote by $o(g)$ the \textit{order} of $g$. Let $\mathcal{F}(G)$ be the family of all subgroups of $G$. The \textit{Boolean algebra} of $G$, denoted by $\mathcal{B}(G)$, is the set whose elements are obtained by arbitrary finite intersections, unions, and complements of the elements in $\mathcal{F}(G)$. The minimal non-empty elements of $\mathcal{B}(G)$ are called \textit{atoms}. Thus every element of $\mathcal{B}(G)$ is a union of some atoms. For each $g\in G$, the atom of $\mathcal{B}(G)$ containing $g$ is of the form (cf. \cite{AP12}) 
$$
[g]=\{g'\in G\mid \langle g'\rangle=\langle g\rangle\}=\{kg\mid k\in \mathbb{Z}_{o(g)}^*\}.
$$
Since $o(g)\mid n$, by Lemma \ref{lem::5}, the atom $[g]$ can also be  expressed as 
$$
[g]=\{kg\mid k\in \mathbb{Z}_{4n}^*\}.
$$
Therefore, the condition $kA=A$ for all $k\in \mathbb{Z}_{4n}^*$ in Theorem \ref{thm::3} is actually equivalent to 
$$
A\in \mathcal{B}(G).
$$
It remains to analyze the condition $k_1B=B$ for all $k_1\in F$ in Theorem \ref{thm::3}. 

If $4\nmid n$, we claim that the condition is equivalent to $B=\emptyset$. In fact, if $n\equiv 1\pmod 4$, then $2n-1\in F$, and hence  $(2n-1)B=B$. On the other hand, since the order of any element in $B$ is a divisor of $n$,  we have $(2n-1)B=-B$. Then $B=-B$, and hence $B=\emptyset$ due to $B\cap (-B)=\emptyset$.  Similarly, we can prove that $B=\emptyset$ by taking  $n-1\in F$ when $n\equiv 2\pmod 4$, and $2n-1\in F$ when $n\equiv 3\pmod 4$. 

Now suppose $4\mid n$. Let $G(4)=\{g\in G\mid o(g)\equiv 0\pmod 4\}$. We assert that the condition implies that $B\subseteq G(4)$. If not,  there exists some $x\in B$ such that $4\nmid o(x)$. Let $\ell$ be the largest positive integer such that $2^\ell\mid n$. Clearly, $\ell\geq 2$. Since $n/2^{\ell-1}$ and $n$ share the same prime divisors, and $n/2^{\ell-1}-1\equiv 1\pmod 4$,  we have  $n/2^{\ell-1}-1\in F$. Hence, $(n/2^{\ell-1}-1)x\in B$. On the other hand,   $(n/2^{\ell-1}-1)x=-x\in -B$ because $o(x)\mid n$ and $4\nmid o(x)$. Therefore, $B\cap (-B)\neq \emptyset$, a contradiction. For each $g\in G(4)$, we denote  
$$
[\![g]\!]=\{kg\mid k\in \mathbb{Z}_{o(g)}^*,~k\equiv 1\pmod 4\}.
$$
Since $o(g)\mid 4n$ and $4\mid n$, according to  the proof of Lemma \ref{lem::5}, we have
$$
[\![g]\!]=\{kg\mid k\in \mathbb{Z}_{4n}^*,~k\equiv 1 \pmod 4\}=\{kg\mid k\in F\}.
$$
Let $\mathcal{D}(G)$ denote the set of all subsets $D$ of $G$ such that $D$ is the union of some $[\![g]\!]$'s with $g\in G(4)$ (cf. \cite{KB21}).
Then we see that the condition $k_1B=B$ for all $k_1\in F$  is actually equivalent to 
$$
B\in \mathcal{D}(G).
$$

Concluding the above discussion, we obtain the following  result due to Kadyan and  Bhattacharjya \cite{KB21}.

\begin{corollary}[\cite{KB21}]\label{cor::2}
Let $G$ be an abelian group (under addition) with $exp(G)=n$, and let  $S=A\cup B$ be  a subset of $G\setminus \{0\}$ such that  $A=-A$ and $B\cap (-B)=\emptyset$. Then the  mixed Cayley graph $\Gamma=\mathrm{Cay}(G,S)$ is integral if and only if  $A\in \mathcal{B}(G)$, and $B=\emptyset$ when $4\nmid n$, $B\in\mathcal{D}(G)$ when $4\mid n$.
\end{corollary}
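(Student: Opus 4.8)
The plan is to specialize Theorem~\ref{thm::3} and then translate its two invariance conditions into the combinatorial language of $\mathcal{B}(G)$ and $\mathcal{D}(G)$. By Theorem~\ref{thm::3}, $\Gamma$ is integral if and only if $kA=A$ for all $k\in\mathbb{Z}_{4n}^*$ and $k_1B=B$ for all $k_1\in F$, so I would treat the conditions on $A$ and $B$ separately.

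For $A$, the idea is to identify the atoms of $\mathcal{B}(G)$ with the orbits of $\mathbb{Z}_{4n}^*$ acting on $G$. Since the atom through $g$ equals $[g]=\{kg\mid k\in\mathbb{Z}_{o(g)}^*\}$ and $o(g)\mid n\mid 4n$, Lemma~\ref{lem::5} allows one to replace $\mathbb{Z}_{o(g)}^*$ by $\mathbb{Z}_{4n}^*$, as the action of a unit $k$ depends only on $k\bmod o(g)$; this gives $[g]=\{kg\mid k\in\mathbb{Z}_{4n}^*\}$. Hence $A$ is fixed by every $k\in\mathbb{Z}_{4n}^*$ precisely when $A$ is a union of atoms, that is, when $A\in\mathcal{B}(G)$.

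For $B$ I would split into the two stated cases. When $4\nmid n$, the strategy is to exhibit a single unit $k\in F$ with $k\equiv -1\pmod n$; then $kB=-B$, while the invariance forces $kB=B$, so $B=-B$ and therefore $B=\emptyset$ by $B\cap(-B)=\emptyset$. Such a $k$ comes from a short case analysis on $n\bmod 4$: one may take $k=2n-1$ when $n\equiv 1,3\pmod 4$ and $k=n-1$ when $n\equiv 2\pmod 4$, each being $\equiv 1\pmod 4$, coprime to $4n$, and $\equiv -1\pmod n$. When $4\mid n$, I would first prove $B\subseteq G(4)$: if some $x\in B$ had $4\nmid o(x)$, then, writing $\ell$ for the largest integer with $2^\ell\mid n$, the unit $k=n/2^{\ell-1}-1$ lies in $F$ and satisfies $k\equiv -1\pmod{o(x)}$ because $o(x)\mid n/2^{\ell-1}$, so $-x=kx\in B$, contradicting $B\cap(-B)=\emptyset$. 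Once $B\subseteq G(4)$ is secured, the same lifting argument as for $A$, now using $4\mid o(g)$ to preserve the residue modulo~$4$, identifies the $F$-orbit of each $g\in G(4)$ with $[\![g]\!]=\{kg\mid k\in F\}$, so $F$-invariance of $B$ is exactly the condition $B\in\mathcal{D}(G)$. In both cases the reverse implications are immediate, since a union of orbits is automatically invariant.

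The only genuinely delicate point is the explicit construction of these units. The case $4\nmid n$ is routine once the correct residue is written down, but the case $4\mid n$ is trickier: the naive candidate $k\equiv -1\pmod n$ lies in $-F$ rather than $F$, so one must shrink the modulus to $n/2^{\ell-1}$ so that $k$ simultaneously inverts every element of $B$ whose order is not divisible by~$4$ and still satisfies $k\equiv 1\pmod 4$. Verifying $\gcd(k,4n)=1$ and $k\equiv 1\pmod 4$ in each case is the only real bookkeeping; the orbit-versus-atom identifications then follow directly from Lemma~\ref{lem::5}.
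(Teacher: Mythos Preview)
Your proposal is correct and follows essentially the same approach as the paper: both derive the corollary from Theorem~\ref{thm::3} by identifying the $\mathbb{Z}_{4n}^*$-orbits with the atoms $[g]$ via Lemma~\ref{lem::5}, and by the identical case analysis---using $2n-1$ or $n-1$ when $4\nmid n$, and $n/2^{\ell-1}-1$ when $4\mid n$---to force $B=\emptyset$ or $B\subseteq G(4)$ before matching the $F$-orbits with the sets $[\![g]\!]$. The only cosmetic difference is that you make the divisibility $o(x)\mid n/2^{\ell-1}$ explicit, whereas the paper leaves it implicit.
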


\section{Concluding remarks}
In this paper, we determine the splitting fields and algebraic degrees of mixed Cayley graphs over abelian groups. As applications, we obtain the splitting fields and algebraic degrees of undirected Cayley graphs over abelian groups, and provide a characterization for integral mixed Cayley graphs over abelian groups.

In \cite{MO20}, Mohar introduced the  \textit{Hermitian adjacency matrix of second kind} $HS(\Gamma)$ for a mixed graph  $\Gamma$, which is obtained from $H(\Gamma)$ by replacing  $\mathbf{i}$ with $e^{2\pi \mathbf{i}/6}=(1+\sqrt{3}\mathbf{i})/2$. Let $\Gamma$ be a mixed Cayley graph over an abelian group. By using the method developed in the present paper,  we can also determine the splitting field of the characteristic polynomial of $HS(\Gamma)$. Moreover, we can give a sufficient and necessary condition for  $HS(\Gamma)$ containing only integer eigenvalues.

\section*{Acknowledgements}
L. Lu is supported by National Natural Science Foundation of China (Grant No. 12001544) and Natural Science Foundation of Hunan Province  (Grant No. 2021JJ40707). X. Huang is supported by National Natural Science Foundation of China (Grant No. 11901540).

\end{document}